\newtheorem{theorem}{Theorem}[section]
\newtheorem{lemma}[theorem]{Lemma}
\newtheorem{corollary}[theorem]{Corollary}
\newtheorem{definition}[theorem]{Definition}
\newcounter{fig}
\def \eqdef {\overset{\text{\tiny def}}{=}}
\def\R{\mathbb R}
\def\Z{\mathbb Z}
\def\C{\mathcal C}
\def\S{\mathcal S}
\def\Re{\mathcal R}
\def\K{\mathcal K}
\title{Boundedness of trajectories for weakly reversible, single linkage class reaction systems}
\author{David F. Anderson$^{1}$}
\begin{document}

\maketitle

\footnotetext[1]{Department of Mathematics, University of Wisconsin, Madison,
  WI, 53706.  Grant support from NSF grant DMS-1009275}

\begin{abstract} 
  This paper is concerned with the dynamical properties of deterministically modeled chemical reaction systems with mass-action kinetics.  Such models are ubiquitously found in chemistry, population biology, and the burgeoning field of systems biology.  A basic question, whose answer remains largely unknown, is the following:  for which network structures do trajectories of mass-action systems remain bounded in time?    In this paper, we conjecture that the result holds when the reaction network is weakly reversible, and prove this conjecture in the case when the reaction network consists of a single linkage class, or connected component.   
  
\end{abstract}



\section{Introduction}

Building off the work of Fritz Horn, Roy Jackson, and Martin Feinberg
\cite{FeinbergLec79, Feinberg87, FeinHorn1972, Horn72, Horn74, HornJack72} the
mathematical theory termed ``Chemical Reaction Network Theory'' has, over the past 40 years, determined many of the basic qualitative properties of chemical reaction networks and, more generally, models of population processes.   As the exact values of key system parameters, termed \textit{rate constants} and which we will denote by $\kappa_k$, are usually difficult to find experimentally
and, hence, are oftentimes unknown, the results tend to be \textit{independent of the values of these parameters}.  In large part motivated by the Global Attractor Conjecture \cite{CraciunShiu09}, much of the recent attention in this field has focussed on which network structures guarantee that trajectories are \textit{persistent}, in that they can not approach the boundary of the positive orthant along a sequence of times \cite{Anderson2011, AndShiu, Sontag2007, Angeli2007, CraciunShiu09, CraciunPantea, JohnstonSiegel2, JohnstonSiegel2011, Pantea2011}.  In this paper we consider a related question:  for which network structures do trajectories of mass-action systems necessarily remain \textit{bounded} in time?  This question is similar to that of persistence in that both force us to consider extreme behaviors of the species, and, hence, the monomials of the dynamical system.  Similar to the well known Persistence Conjecture (see below), we will conjecture that all trajectories of weakly reversible systems with mass-action kinetics are bounded in time.  We will prove this conjecture in the case when the reaction network consists of a single linkage class, or connected component.  The methods used in this paper are similar to those introduced in \cite{Anderson2011}, where the Global Attracor Conjecture was shown to hold in the single linkage class case.

\subsection{Formal statement of the problem}
\label{sec:background}

Two of the most basic questions that can be asked about a mathematical model for a chemical, or more generally a population, process are $(i)$ must all trajectories be bounded in time and $(ii)$ are trajectories persistent in the sense of Definition \ref{def:persistence} below.

\begin{definition} 
  For $t\ge 0$ denoting time, let $\phi(t,x_0)$ be a trajectory to a dynamical system in $\R^N$ with initial condition $x_0$.  A trajectory $\phi(t,x_0)$ with state space $\R^N_{\ge 0}$  is said to be {\em 
    persistent} if 
    \begin{equation*}
	\liminf_{t\to \infty} \phi_i(t,x_0) > 0,
\end{equation*}
for all $i \in \{1,\dots,N\}$, where $\phi_i(t,x_0)$ denotes the $i$th component of $\phi(t,x_0)$. A dynamical system is  said to be
   {\em persistent} if each trajectory with non-negative initial 
  condition is persistent.  
  \label{def:persistence} 
\end{definition} 

 Thus, persistence corresponds to a non-extinction requirement.  Some authors refer to dynamical systems satisfying  the above condition  as \textit{strongly persistent} \cite{Takeuchi1996}.  In their work, persistence only requires the weaker condition that $\limsup_{t\to \infty} \phi_i(t,x_0) > 0$  for each $i\in\{1,\dots,N\}$. 

%

The following conjecture of Feinberg (see Remark 6.1.E in \cite{Feinberg87}) is one of the most well known in chemical reaction network theory.  It pertains to systems whose reaction networks are weakly reversible, or strongly connected (see Section \ref{sec:def_concepts}), and is intimately related to the Global Attractor Conjecture \cite{CraciunShiu09}.

\vspace{.1in}

\noindent \textbf{Persistence Conjecture.\ (Version 1)}  Any weakly reversible
reaction network with mass-action kinetics  is persistent.

\vspace{.1in}

In \cite{Anderson2011}, it was pointed out that there are really two natural conjectures pertaining to weakly reversible reaction networks with mass-action kinetics, and  that these should be separated.  

\begin{definition}
	For $t\ge 0$ denoting time, let $\phi(t,x_0)$ be a trajectory to a dynamical system in $\R^N$ with initial condition $x_0$.  A trajectory $\phi(t,x_0)$   is said to be {\em 
    bounded} if 
    \begin{equation*}
	\limsup_{t\to \infty} |\phi(t,x_0)| < \infty.
\end{equation*}
  A dynamical system is  said to have
   {\em bounded trajectories} if each trajectory  is bounded.  
\label{def:boundedTraj}
\end{definition}

\vspace{.1in}

\noindent \textbf{Persistence Conjecture.\ (Version 2)}  Any weakly reversible reaction network with mass-action kinetics and \textit{bounded trajectories} is persistent.

\vspace{.1in}

\noindent \textbf{Boundedness Conjecture.}  Any weakly reversible reaction network with mass-action kinetics has bounded trajectories.

\vspace{.1in}

Clearly, the latter two conjectures would imply the first, which would then imply the well known Global Attractor Conjecture (see \cite{CraciunShiu09, Feinberg87}).  Note that none of the conjectures make any assumptions on the choice
of rate constants, which are the natural parameters found in these systems (see Section \ref{sec:def_concepts}).  The Boundedness Conjecture stated above is quite similar to the Extended Permanence Conjecture found in \cite{CraciunPantea}, which conjectures that all ``endotactic'' systems (which include those that are weakly reversible) are \textit{permanent}.  Permanence is an even stronger condition than bounded trajectories in that all trajectories of a compatibility class (invariant manifold), regardless of initial condition, must enter a single compact subset of $\R^N_{>0}$.  The Extended Permanence Conjecture is proven in \cite{CraciunPantea} in the case when the system is two-dimensional.  In Section \ref{sec:permanence} we briefly discuss permanence and conclude that weakly reversible, single linkage class systems are permanent if there is a $\delta>0$ for which $\liminf_{t\to \infty}\phi_i(t,x_0) \ge \delta$ for all $x_0$ and all $i$.  That is, when the system is, in some sense, \textit{uniformly persistent}.

Each of the above mentioned conjectures remains open.  In recent years there has been a great amount of energy aimed at resolving the Persistence Conjecture, and typically that work has focused on Version 2.  This focus has been quite natural as much of the motivation for the work stemmed from consideration of  ``complex-balanced'' systems, see \cite{FeinbergLec79, Feinberg87}, which are known to have bounded trajectories.  Relatively little attention has been paid, therefore, to the related Boundedness Conjecture, as formally stated above.    We will refrain from giving an exhaustive background on the work aimed at resolving the Persistence Conjectures, and instead point the interested reader to \cite{Anderson2011}, where such an introduction, including most of the relevant references related to persistence and the Global Attractor Conjecture, can be found.

\subsection{Results in this paper}

  In this paper we will prove that the Boundedness Conjecture holds for all systems whose network consists of a single linkage class, or connected component (see Section \ref{sec:def_concepts}).   To prove our results, we will use a method, introduced in \cite{Anderson2011}, for partitioning the relevant monomials of the dynamical system along sequences of trajectory points into classes with comparable growths. This will allow us to prove that there is a Lyapunov function which decreases along all paths when $|x(t)|$ is sufficiently large. 
  
  We will prove all of our results in a slightly more general setting than mass-action kinetics in that we will allow our rate ``constants''  to actually be bounded functions of time.  Results pertaining to systems with such a generalized mass-action kinetics are useful as these systems arise naturally when a system with standard mass-action kinetics is {\em projected} onto a subset of the species (see Section 3 of \cite{Anderson2011}).
  
The outline of the paper is as follows.  In Section \ref{sec:def_concepts}, we provide a review of the requisite definitions and terminology from chemical reaction network theory.   In Section \ref{sec:results}, we give our main results together with their proofs.

\section{Preliminary concepts and definitions}
\label{sec:def_concepts}

Most of the following definitions are standard in chemical reaction network theory.  The interested reader should see  \cite{FeinbergLec79} or \cite{Gun2003} for a more detailed introduction.\vspace{.125in}

\noindent \textbf{Reaction networks.}  An example of a chemical reaction is $2S_{1}+S_{2} ~\rightarrow~ S_{3},$
where we interpret the above as saying two molecules of type $S_1$ combine with a molecule of type $S_2$ to produce a molecule of type $S_3$.  For now, assume that there are no other reactions under consideration.  The $S_{i}$ are called chemical {\em species} and the linear combinations of the species found at either end of the reaction arrow, namely $2S_{1}+S_{2}$ and
$S_{3}$, are called chemical {\em complexes.}  Assigning the {\em
  source} (or reactant) complex $2S_{1}+S_{2}$ to the vector $y =
(2,1,0)$ and the {\em product} complex $S_{3}$ to the vector
$y'=(0,0,1)$, we can formally write the reaction as $ y \rightarrow y' .$ 

 In the
general setting we denote the number of species by $N$, and for $i \in \{1,\dots, N\}$ we denote the $i$th species as $S_{i}$.  We then
consider a finite set of reactions with the $k$th denoted by $ y_{k} \rightarrow y_{k}', $
 where $y_k, y_k' \in \Z^N_{\ge 0}$ are (non-equal) vectors whose components give the coefficients of the source and product complexes, respectively.  Using a slight abuse of notation, we will also refer to the vectors $y_k$ and $y_k'$ as the complexes.  Note that if $y_k = \vec 0$ or $y_k' = \vec 0$ for some $k$, then the $k$th
reaction represents an input or output, respectively, to the system.  Note also that any
complex may appear as both a source complex and a product complex in
the system.  We will usually, though not always (for example, see condition 3 in Definition \ref{def:crn} below) use the prime $'$ to denote the product complex of a given reaction.

As an example, suppose that the entire network consists of the two species $S_1$ and $S_2$ and the two reactions
\begin{equation}
	S_1 \to S_2 \quad \text{and} \quad S_2 \to S_1,
	\label{eq:ex1}
\end{equation}
where $S_1 \to S_2$ is arbitrarily labeled as ``reaction 1.''  Then $N = 2$ and 
\begin{equation*}
	y_1 = (1,0), \quad y_1' = (0,1) \qquad \text{and} \qquad  y_2 = (0,1), \quad y_2' = (1,0).
\end{equation*}
Thus, the vector $(1,0)$, or equivalently the complex $S_1$, is both $y_1$, the source of the first reaction, and $y_2'$, the product of the second.

 For ease of notation, when there is no need for
enumeration we will typically drop the subscript $k$  from the notation
for the complexes and reactions.

\begin{definition}
  Let $\S = \{S_i\}_{i=1}^N$, $\C = \{y\}$ with $y \in \Z^N_{\ge 0}$, and $\Re = \{y \to y'\}$ denote
  finite sets of species, complexes, and reactions, respectively.  The triple
  $\{\S, \C, \Re\}$ is called a {\em chemical reaction network} so long as the following three natural requirements are met:
  \begin{enumerate}
     \item  For each $S_i\in \S$, there exists at least one complex $y\in \C$  for which $y_{i} \ge 1$.
     \item There is no trivial reaction $y \to y \in \Re$ for some complex $y \in \C$.
     \item For any $y\in \C$, there must exist a $y'\in \C$ for which $y \to y' \in \Re$ or $y' \to y \in \Re$.
  \end{enumerate}
  \label{def:crn}
\end{definition}

\textbf{Notation:}  We will use each of the following choices of notation to denote a complex from $\C$:  $y$, $y'$, $y_i$, $y_j$, $y_k$, $y_k'$, etc.  However, there will be other times in which we wish to denote the $i$th component of a complex.  If the complex in question has been denoted by $y_k$, then we will write $y_{k,i}$.  However, if the complex has been denoted by $y$, then we would write its $i$th component as $y_i$, which, through context, should not cause confusion with a choice of \textit{complex} $y_i$.  See, for example, condition 1 in Definition \ref{def:crn} above.

\begin{definition}
To each reaction network $\{\mathcal{S},\mathcal{C},\mathcal{R}\}$
we assign a unique directed graph called a {\em reaction diagram}
constructed in the following manner.  The nodes of the graph are the
complexes, $\mathcal{C}$.  A directed edge $(y,y')$ exists if and only
if $y \to y' \in \Re$.  Each connected
component of the resulting graph is termed a  {\em linkage class} of
the graph.  
\label{def:diagram}
\end{definition}

For example, the system described in and around \eqref{eq:ex1} has reaction diagram $S_1 \rightleftarrows S_2$,
which consists of a single linkage class.

\begin{definition}
  Let $\{\S,\C,\Re\}$ denote a chemical reaction network.  Denote the
  complexes of the $i$th linkage class by $L_i \subset \C$.  We say   $T \subset \C$
  consists of a  {\em union of linkage classes} if $T = \cup_{i \in I} L_i$
  for some nonempty index set $I$.
\end{definition}

\begin{definition}
  The chemical reaction network $\{\S,\C,\Re\}$ is said to be  {\em weakly
    reversible} if each linkage class of the corresponding reaction
  diagram is strongly connected.  A network is said to be
   {\em reversible} if $y' \to y \in \Re$ whenever $y \to y' \in
  \Re.$ 
  \label{def:WR}
\end{definition}

 It is easy to see that a chemical reaction network is weakly reversible if and only if for each reaction $y \to y'\in \Re$, there exists a sequence of complexes, $y_1,\dots, y_r\in \C$, such that $y' \to y_1 \in \Re, y_1 \to y_2 \in \Re, \cdots, y_{r-1}\to y_r\in \Re,$ and $y_r \to y\in \Re$.

\vspace{.225in}

\noindent \textbf{Dynamics.}  A chemical reaction network gives rise to a dynamical system by way of
a \textit{rate function} for each reaction.
That is, for each $y_k \to y_k'\in \Re$, or simply $k\in\{1,\dots,|\Re|\},$ we suppose the existence
of a function $\displaystyle R_k =
R_{y_k \to y_k'}$ that determines the rate of that reaction.
 The functions $R_{k}$ are
typically referred to as the \textit{kinetics} of the system and will be denoted by $\K$, or $\K(t)$ in the non-autonomous case.  The
dynamics of the system is then given by the following coupled set of
(typically nonlinear) ordinary differential equations
\begin{equation}
  \dot x(t) = \sum_{k} R_{k}(x(t),t)(y_k' - y_k),
  \label{eq:main_general}
\end{equation}
where $k$ enumerates over the reactions and $x(t) \in \R^N_{\ge 0}$ is a vector whose $i$th component represents the concentration of species $S_i$ at time $t\ge 0$.

\begin{definition}
	A chemical reaction network $\{\S,\C,\Re\}$ together with a choice of kinetics $\K$ is called a {\em chemical reaction system} and is denoted via the quadruple $\{\S,\C,\Re,\K\}$.  In the non-autonomous case where the $R_k$ can depend explicitly on $t$, we will write $\{\S,\C,\Re,\K(t)\}$.   We say that a chemical reaction system is
   {\em weakly reversible} if its underlying network is.
\end{definition}

Integrating \eqref{eq:main_general} with respect to time yields
\begin{equation*}
  x(t) = x(0) + \sum_{k} \left(\int_0^t R_k(x(s),s) ds \right)
  (y_k' - y_k). 
\end{equation*}
Therefore, $x(t) - x(0)$ remains within $S =
\text{span}\{y_k' - y_k\}_{k \in \{1,\dots,R\}}$ for all time.

\begin{definition}
  The  {\em stoichiometric subspace} of a network is the linear
  space $S = \text{\em span}\{y_k' - y_k\}_{k \in \{1,\dots,|\Re|\}}$.  The vectors $y_k' - y_k$ are called the  {\em reaction vectors}.
  \label{def:stoich_sub}
\end{definition}

Under mild conditions on the rate functions of a
system, a trajectory $x(t)$ with
strictly positive initial condition $x(0) \in \R^N_{>0}$ remains in the
strictly positive orthant $\R^N_{>0}$ for all time (see, for example, Lemma~2.1 of
\cite{Sontag2001}).  Thus, the trajectory remains in the relatively open set $(x(0)
+ S) \cap \mathbb{R}^N_{> 0}$, where $x(0) + S := \{z \in \R^N \ | \ z
= x(0) + v, \text{ for some } v \in S\}$, for all time.  In other
words, this set is \textit{forward-invariant} with respect to the
dynamics.  It is also easy to show that under the same mild conditions on $R_k$, $(x(0) + S) \cap
\mathbb{R}^N_{\ge  0}$ is forward invariant with respect to the dynamics.  The sets $(x(0) + S) \cap
\mathbb{R}^N_{> 0}$ 
will be referred to as the \textit{positive stoichiometric compatibility classes}, or simply as the \textit{positive classes}.  Note that if each of the sets $(x(0) + S) \cap \mathbb{R}^N_{> 0}$ is bounded, 
then all trajectories of the dynamical system are necessarily bounded also.  Therefore, the results of this paper are of interest when each positive class is an unbounded set.

The most common kinetics is that of \textit{mass-action kinetics}. A
chemical reaction system is said to have mass-action kinetics if all rate functions $R_{k} = R_{y_k \to y_k'}$ 
take the multiplicative form
\begin{equation}
  R_{k}(x) =  \kappa_k x_1^{y_{k,1}} x_2^{y_{k,2}} \cdots x_N^{y_{k,N}},
  \label{eq:massaction}
\end{equation}
where $\kappa_k$ is a positive reaction rate constant and $y_k$ is the source complex for the reaction.  For $u\in \R^N_{\ge 0}$ and $v \in \R^N$, we define 
\begin{equation*}
 u^v \eqdef u_1^{v_1} \cdots u_N^{v_N},
\end{equation*}
 where we have
adopted the convention that $0^0 = 1$, and the above is undefined if $u_i = 0$ when $v_i < 0$.  Mass action kinetics can then be written succinctly as $R_k(x) = \kappa_k x^{y_k}.$
Combining \eqref{eq:main_general} and \eqref{eq:massaction} gives the
following system of differential equations  
\begin{equation}
  \dot x(t) = \sum_{k} \kappa_k x(t)^{y_k}(y_k' - y_k).
  \label{eq:main}
\end{equation}
  
We will generalize the equation \eqref{eq:main} slightly by allowing each $\kappa_k$ to be a bounded function of time. See Definition 2.6 in \cite{CraciunPantea} for a similar definition, and \cite{Angeli2011} for another recent treatment of chemical reaction systems with non-autonomous dynamics.

\begin{definition}
	We say that the non-autonomous system $\{\S,\C,\Re,\K(t)\}$ has  {\em bounded mass-action kinetics} if there exists an $\eta > 0$ such that for each $k\in \{1,\dots,|\Re|\}$
	\begin{equation*}
		R_k(x,t) = \kappa_k(t) x^{y_k},
	\end{equation*}
	where $\eta < \kappa_k(t) < 1/\eta$ for all  $t \ge 0$.  Hence, the vector of concentrations satisfies
	\begin{equation*}
  \dot x(t) = \sum_{k} \kappa_k(t) x(t)^{y_k}(y_k' - y_k).
  \label{eq:main2}
	\end{equation*}
\end{definition}

 We require some final notation.  Let  $v \in \R^N$ for some $N \ge 1$, and let $U \subset \{1,\dots,N\}$.  We write $U[j]$ for the $j$th component of $U$. We then write $v|_U$ to denote the vector of size $|U|$ with 
\begin{equation*}
 v|_{U,j} = (v|_U)_j \eqdef v_{U[j]}
\end{equation*}
 for $j \in \{1,\dots, |U|\}.$
  Thus, $v|_U$ simply denotes the projection of $v$ onto the components enumerated by $U$.
For example, if $N = 8$ and $U = \{2,4,7\}$, then for any $v \in \R^8$, $v|_{U} = (v_2,v_4,v_7)\in \R^3$.

\section{Main results}
\label{sec:results}

 Recall that for any vectors $u,v$ such that $u \in \R^N_{\ge 0}$ and $v\in \R^N$ we define $u^v \eqdef u_1^{v_1}\cdots u_N^{v_N}$, where we use the convention $0^0 = 1$.   For completeness, we recall the following standard definition.
 
 \begin{definition}
 	For any set $\C$, we say $\{T_i\}$ is a {\em partition} of $\C$ if each $T_i$ is non-empty, $\bigcup_i T_i = \C$, and for all $i\ne j$,  $T_i \cap T_j = \emptyset$.
 \end{definition}
 

 The following combination of Definition \ref{def:partition} and Lemma \ref{lem:partition} is a generalization of Definition 4.1 and Lemma 4.2 found in  \cite{Anderson2011}.  While the generalization is not made use of in the current paper, we hope that pointing out that the function $f$ below can be nearly arbitrary will be beneficial in future work.

\begin{definition}
  Let $\C$ denote a finite set of vectors in $\R^N$.  Let $x_n \in
  \R^N$ denote a sequence of points.  For $D \subset \R^N$ with $\{x_n\} \subset D$, let $f:D\times \C \to \R_{>0}$.  We say that $\C$ is
  {\em partitioned along the sequence $\{x_n\}$ with respect to $f$} if there exists a partition, $\{T_i\}_{i = 1}^P$, of $\C$, where the $T_i$ are termed tiers, and a constant $C > 1$,  such that
  \begin{enumerate}[$(i)$]
  \item if $y_j,y_k \in T_i$ for some $i \in \{1,\dots,P\}$, then for all $n$
    \begin{equation*}
      \frac{1}{C} f(x_n,y_j) \le f(x_n,y_k) \le C f(x_n,y_j).
    \end{equation*}

  \item if $y_j \in T_i$ and $y_k \in T_{i+m}$ for some $m \ge 1$,
    then
    \begin{equation*}
      \frac{f(x_n,y_j)}{f(x_n,y_k)}  \to \infty, \quad
      \text{as } n \to \infty.
    \end{equation*}
  \end{enumerate}
 When $f(x,y) = x^y$, the case considered in both this paper and \cite{Anderson2011}, we will simply say that {\em $\C$ is partitioned along $\{x_n\}$}.
 \label{def:partition}
\end{definition}
Note that we have a natural ordering of the tiers: $T_1 \succ T_2 \succ T_3 \succ \cdots \succ T_P$, and we say $T_1$ is the ``highest'' tier, whereas $T_P$ is the ``lowest'' tier.

The proof of the following lemma is a slight modification of the proof of Lemma 4.3 in \cite{Anderson2011} and is omitted. 

\begin{lemma}
  Let $\C$ denote a finite set of vectors in $\R^N$.  Let $x_n$ be a
  sequence of points in $\R^N_{>0}$.  For $D \subset \R^N$ with $\{x_n\} \subset D$, let $f:D\times \C \to \R_{>0}$.  Then, there exists a subsequence of
  $\{x_n\}$ along which $\C$ is partitioned with respect to $f$.
  \label{lem:partition}
\end{lemma}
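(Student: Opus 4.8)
The plan is a pigeonhole argument over the finitely many pairwise ratios of the numbers $f(x_n,\cdot)$, combined with the sequential compactness of the extended half-line $[0,\infty]$.

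First, I would use that $\C$ is finite, so that there are only finitely many ordered pairs $(y_j,y_k)$ of distinct elements of $\C$. For each such pair the sequence $n\mapsto f(x_n,y_j)/f(x_n,y_k)$ is well defined, since $f$ takes values in $\R_{>0}$, and it lies in the sequentially compact space $[0,\infty]$. Passing to a subsequence for the first pair, then a further subsequence of that for the second, and so on through all these finitely many pairs --- each passage preserving the limits already secured --- I obtain a single subsequence, which I relabel $\{x_n\}$, along which $L_{jk} \eqdef \lim_{n\to\infty} f(x_n,y_j)/f(x_n,y_k)$ exists in $[0,\infty]$ for every ordered pair $y_j,y_k \in \C$, with $L_{jj} = 1$.

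Next, I would construct the tiers as equivalence classes. Declare $y_j \sim y_k$ whenever $L_{jk} \in (0,\infty)$. Using the identity $f(x_n,y_j)/f(x_n,y_m) = \bigl(f(x_n,y_j)/f(x_n,y_k)\bigr)\bigl(f(x_n,y_k)/f(x_n,y_m)\bigr)$ together with the algebra of limits in $[0,\infty]$, one checks that $\sim$ is reflexive, symmetric and transitive. If $y_j$ and $y_k$ lie in \emph{distinct} classes then $L_{jk} \notin (0,\infty)$, hence $L_{jk} \in \{0,\infty\}$; the same identity shows that which of these two values occurs depends only on the pair of classes, not on the chosen representatives, because the stray correction factors converge to positive finite limits and so never produce the indeterminate form $0\cdot\infty$. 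Declaring a class $A$ to be ``higher'' than a class $B$ precisely when the corresponding limit is $\infty$ then yields, after one more use of the identity to check transitivity, a strict total order on the finite set of classes; I would enumerate them as $T_1 \succ T_2 \succ \cdots \succ T_P$.

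Finally, I would check the two conditions of Definition \ref{def:partition}. For $(i)$: if $y_j,y_k \in T_i$ then $f(x_n,y_j)/f(x_n,y_k) \to L_{jk} \in (0,\infty)$, so this sequence is bounded above and bounded away from $0$ over \emph{all} $n$; taking $C>1$ large enough to majorize, and $1/C$ small enough to minorize, these finitely many bounds over all within-tier pairs gives a single constant valid for every $n$, as required. For $(ii)$: if $y_j \in T_i$ and $y_k \in T_{i+m}$ with $m \ge 1$, transitivity of $\succ$ gives $T_i \succ T_{i+m}$, which is by construction exactly the statement $f(x_n,y_j)/f(x_n,y_k) \to \infty$. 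The step needing the most care --- the ``main obstacle,'' modest as it is --- is the bookkeeping around well-definedness: that $\sim$ is genuinely transitive, and that the order induced on the tiers is independent of representatives and is itself transitive. Each of these reduces to the single observation that a product of two sequences converging in $(0,\infty]$ converges to the product of the limits in $(0,\infty]$, so the degenerate product $0\cdot\infty$ never arises in the steps that matter.
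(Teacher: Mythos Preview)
Your argument is correct. The paper actually omits its own proof of this lemma, deferring to a slight modification of Lemma~4.3 in \cite{Anderson2011}; the diagonal-subsequence extraction of all pairwise ratios in the compact space $[0,\infty]$, followed by the equivalence relation $y_j\sim y_k \Leftrightarrow L_{jk}\in(0,\infty)$ and the induced total order on the classes, is exactly the standard route and matches what that reference does in the special case $f(x,y)=x^{y}$.
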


The following lemma, which is similar in spirit to Farkas' Lemma, states that for any set of vectors in $\R^N$, either their span includes a non-zero vector in the non-positive orthant $\R^N_{\le 0}$, or there is vector normal to their span that intersects the strictly positive orthant.

\begin{lemma}[Stiemke's Theorem, \cite{Stiemke1915}]
  For $i = 1,\dots, n$, let $u_i \in \R^m$. Either there exists a $c \in \R^n$ such that 
  \begin{equation*}
    \left(\sum_{i=1}^n c_i u_i\right)_j \le  0, \quad j = 1,\dots,m
  \end{equation*}
  and such that at least one of the inequalities is strict,
  or there is a $w \in \R^m_{> 0}$ such that $w\cdot u_i=0$ for each
  $i\in \{1,\dots, n\}$.
  \label{lem:Stiemke}
\end{lemma}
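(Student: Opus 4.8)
The plan is to derive this alternative --- Stiemke's Theorem --- from the strict separating hyperplane theorem for convex sets. First I would assemble $u_1,\dots,u_n$ as the columns of a matrix $A \in \R^{m\times n}$, so that $K \eqdef \{Ac : c \in \R^n\}$ is the span of the $u_i$; in this language the first alternative reads ``$\exists\, c$ with $Ac \le 0$ componentwise and $Ac \neq 0$'' and the second reads ``$\exists\, w \in \R^m_{>0}$ with $A^{\top} w = 0$.'' As a warm-up I would note that the two cannot both hold, since $Ac \le 0$ with some strictly negative component together with $A^{\top}w = 0$, $w > 0$ would give $0 = \langle A^{\top}w, c\rangle = \langle w, Ac\rangle = \sum_{j} w_j (Ac)_j < 0$. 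Hence it suffices to show that failure of the first alternative forces the second.

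So suppose every $c$ with $Ac \le 0$ in fact has $Ac = 0$; equivalently $K \cap \R^m_{\le 0} = \{0\}$. Let $\Delta \eqdef \{p \in \R^m_{\ge 0} : p_1 + \cdots + p_m = 1\}$ be the standard simplex. Since $0 \notin -\Delta$, the hypothesis yields $K \cap (-\Delta) = \emptyset$, and as $K$ is a closed convex set and $-\Delta$ is a compact convex set, strict separation provides $w \in \R^m \setminus \{0\}$ and scalars $\alpha < \beta$ with $\langle w, z\rangle \le \alpha$ for all $z \in -\Delta$ and $\langle w, z\rangle \ge \beta$ for all $z \in K$. I would then extract the two required properties of $w$: since a linear functional bounded below on a linear subspace must vanish on it, $\langle w, z\rangle = 0$ for every $z \in K$, i.e.\ $A^{\top}w = 0$, i.e.\ $w\cdot u_i = 0$ for each $i$; and testing $z = 0 \in K$ gives $\beta \le 0$, so $\alpha < 0$, whereupon each standard basis vector satisfies $-e_j \in -\Delta$ and thus $-w_j = \langle w, -e_j\rangle \le \alpha < 0$, giving $w \in \R^m_{>0}$.

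The step I expect to be the crux is the appeal to \emph{strict} separation: it needs $K$ and $-\Delta$ to be disjoint convex sets with at least one of them compact, and it is exactly to arrange compactness that the argument works with the simplex $-\Delta$ rather than trying to separate $K$ from the non-compact orthant $\R^m_{\le 0}$ directly. The surrounding facts --- that a functional bounded below on a subspace annihilates it, and that nonnegativity on the vertices $e_j$ promotes to strict positivity of $w$ --- are routine.
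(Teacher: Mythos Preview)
Your argument is correct and is a standard derivation of Stiemke's alternative from the strict separating hyperplane theorem; every step checks out, including the use of the compact simplex $-\Delta$ in place of the non-compact orthant to guarantee strict separation, the observation that a linear functional bounded below on the subspace $K$ must vanish there, and the extraction of $w_j>0$ from the vertices $-e_j\in -\Delta$.

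As for comparison with the paper: there is nothing to compare. The paper does not prove this lemma at all --- it is stated as a classical theorem of the alternative and simply attributed to Stiemke (1915). So you have supplied a proof where the paper gives only a citation.
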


\begin{corollary}
    For $i = 1,\dots, n$, let $u_i \in \R^m$. Let $U \subset \{1,\dots, m\}$ and $V = U^c$.  Either there exists a $c \in \R^n$ such that
  \begin{align*}
    \left(\sum_{i=1}^n c_i u_i\right)_j &\le  0, \quad j \in U\\
        \left(\sum_{i=1}^n c_i u_i\right)_j &\ge  0, \quad j \in V
  \end{align*}
  and such that at least one of the inequalities is strict, or there is a $w \in \R^m$ with
  \begin{align*}
  	w_j &> 0 \text{ for } j \in U\\
	w_j & < 0 \text{ for } j \in V
  \end{align*}
   such that $w\cdot u_i=0$ for each
  $i\in \{1,\dots,n\}$.
  \label{lem:Stiemke2}
\end{corollary}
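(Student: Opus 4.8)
The plan is to deduce the corollary directly from Stiemke's Theorem (Lemma~\ref{lem:Stiemke}) by flipping the sign of the coordinates indexed by $V$. First I would introduce the signs $\epsilon_j = 1$ for $j \in U$ and $\epsilon_j = -1$ for $j \in V$, and define modified vectors $\tilde u_i \in \R^m$ by $(\tilde u_i)_j = \epsilon_j (u_i)_j$ for $j \in \{1,\dots,m\}$ and $i \in \{1,\dots,n\}$. Applying Lemma~\ref{lem:Stiemke} to the family $\{\tilde u_i\}_{i=1}^n$ then produces exactly one of two alternatives, and the remaining work is simply to translate each back to a statement about the original $u_i$ using the identity $(\sum_i c_i \tilde u_i)_j = \epsilon_j (\sum_i c_i u_i)_j$ and, for the normal-vector case, a matching sign flip on the candidate $w$.

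In the first alternative of Lemma~\ref{lem:Stiemke} there is $c \in \R^n$ with $(\sum_i c_i \tilde u_i)_j \le 0$ for all $j$ and at least one strict inequality. Since $(\sum_i c_i \tilde u_i)_j = \epsilon_j (\sum_i c_i u_i)_j$, this says $(\sum_i c_i u_i)_j \le 0$ for $j \in U$ and $(\sum_i c_i u_i)_j \ge 0$ for $j \in V$; the coordinate at which the Stiemke inequality is strict yields a strict inequality in the corresponding line (because multiplication by the nonzero scalar $\epsilon_j$ preserves strictness), so at least one of the inequalities is strict, which is precisely the first alternative of the corollary.

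In the second alternative there is $w \in \R^m_{>0}$ with $w \cdot \tilde u_i = 0$ for every $i$. Setting $w'_j = \epsilon_j w_j$, one checks $w' \cdot u_i = \sum_j \epsilon_j w_j (u_i)_j = \sum_j w_j (\tilde u_i)_j = w \cdot \tilde u_i = 0$ for each $i$, while $w'_j = w_j > 0$ for $j \in U$ and $w'_j = -w_j < 0$ for $j \in V$ since $w_j > 0$. This is exactly the second alternative of the corollary, so one of the two alternatives always holds.

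I do not expect a genuine obstacle here; the statement is a cosmetic rephrasing of Stiemke's Theorem, and the only point that warrants any care is confirming that the ``at least one of the inequalities is strict'' clause is not lost under the coordinate sign change, which holds because each $\epsilon_j$ is nonzero.
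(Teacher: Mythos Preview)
Your proof is correct and follows essentially the same approach as the paper: the paper defines a sign-flipping map $\theta$ with $\theta(u)_j = u_j$ for $j \in U$ and $\theta(u)_j = -u_j$ for $j \in V$, applies Lemma~\ref{lem:Stiemke} to the vectors $\theta(u_i)$, and concludes immediately. Your $\tilde u_i$ is exactly $\theta(u_i)$, and you have simply spelled out the translation back to the original $u_i$ in more detail than the paper does.
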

\begin{proof}
  Define the vector valued function  $\theta:\R^m \to \R^m$ via
\begin{equation*}
 \theta(u)_j \eqdef \left\{ \begin{array}{cl}
    u_j & \text{ if } j \in U\\
    -u_j & \text{ if } j \in V
 \end{array}\right. .
\end{equation*}
Applying Lemma \ref{lem:Stiemke} to the set of vectors $\theta(u_i)$ proves the result.
\end{proof}
%


\begin{definition}
	Let $w \in \R^N$.  The set $\{i \in \{1,\dots,N\} \ : \ w_i>0\}$ is called the {\em positive support} of $w$ and the set $\{i \in \{1,\dots,N\} \ : \ w_i<0\}$ is called the {\em negative support} of $w$.  The union of the positive and negative support of $w$, i.e. the set $\{i \in \{1,\dots,N\} \ : \ w_i \ne 0\}$, is called the {\em support} of $w$. 
\label{def:support}
\end{definition}

\begin{definition}
  Let $\C$ denote a finite set of vectors in $\R^N$. Let $\{T_i\}$
  denote a partition of $\C$.  Let $U,V \subset \{1,\dots,
  N\}$ with $U\cup V$ nonempty.  We will say that the vector $w \in \R^N$ is a {\em conservation relation that respects the triple} $(U,V,\{T_i\})$ if the
  following two conditions hold:
  \begin{enumerate}
  \item $U$ is the positive support of $w$ and $V$ is the negative support of $w$. 
  \item Whenever $y_j,y_{\ell} \in T_i$
    for some $i$, we have that $w \cdot (y_j - y_{\ell}) = 0$.
  \end{enumerate}
\end{definition}

%

\begin{definition} 
	Let $x_n \in \R^N_{>0}$ denote a sequence of points.  We say that $x_n$ is {\em partially monotonic} if $x_{n,i} \ge x_{n+1,i}$ for each $i$ for which $\liminf_{n\to \infty} x_{n,i}=0$ and if $x_{n,j} \le x_{n+1,j}$ for each $j$ for which $\limsup_{n\to \infty} x_{n,j}=\infty$.
	\label{def:partial_mon}
\end{definition}

Note that Definition \ref{def:partial_mon} stands silent on the behavior of those $j$ for which $0< \liminf_{n\to \infty} x_{n,j} \le \limsup_{n\to\infty} x_{n,j} < \infty$.

\begin{theorem}

  Let $\C$ denote a finite set of vectors in $\R^N$.  Let $x_n \in  \R^N_{>0}$ denote a partially monotonic sequence of points for which $\lim_{n \to \infty}x_{n,i}  \in \{0,\infty\}$ for at least one $i \in \{1,\dots,N\}$.
 Let 
  \begin{align*}
   U &=  \{i \in \{1,\dots,N\} \, : \, \lim_{n\to \infty} x_{n,i} = 0\}\\
   V &=  \{j \in \{1,\dots,N\} \, : \, \lim_{n\to \infty} x_{n,j} = \infty\}.
  \end{align*}
   Finally, suppose that $\C$ is partitioned along
  $\{x_{n}\}$ with tiers $T_i$, for $i=1,\dots,
  P$, and constant $C>0$.  Then,  there is a
  conservation relation $w \in \R^N$ that
  respects the triple $(U, V,\{T_i\})$. 
  \label{thm:conservation}
\end{theorem}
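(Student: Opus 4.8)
The plan is to read off $w$ from Corollary~\ref{lem:Stiemke2}. Set $W=(U\cup V)^c$; since $\lim_n x_{n,i}$ cannot equal both $0$ and $\infty$, the sets $U,V$ are disjoint, and the hypothesis guarantees $U\cup V\neq\emptyset$. A conservation relation respecting $(U,V,\{T_i\})$ is forced to vanish on $W$ (its positive and negative supports are $U$ and $V$, so every coordinate outside $U\cup V$ is zero), so it suffices to produce $\bar w\in\R^{|U\cup V|}$ whose positive support is $U$, whose negative support is $V$, and with $\bar w\cdot (y_j-y_\ell)|_{U\cup V}=0$ whenever $y_j,y_\ell$ lie in a common tier; padding $\bar w$ with zeros on $W$ then gives the desired $w$. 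To get $\bar w$ I would apply Corollary~\ref{lem:Stiemke2} with $m=|U\cup V|$ (coordinates identified with $U\cup V$ and split into $U$ and $V$) and with the finite family of vectors $(y_j-y_\ell)|_{U\cup V}$ ranging over pairs $y_j,y_\ell$ in a common tier. Everything then comes down to showing the first alternative of the corollary cannot hold, forcing the second.

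A preliminary step pins down the behaviour on $W$ using partial monotonicity: for each $k\in W$ the numbers $x_{n,k}$ are bounded above and below by positive constants. Indeed, if $\liminf_n x_{n,k}=0$, then Definition~\ref{def:partial_mon} makes $\{x_{n,k}\}$ nonincreasing, hence $\lim_n x_{n,k}=0$ and $k\in U$, a contradiction; symmetrically $\limsup_n x_{n,k}=\infty$ would put $k$ in $V$. Thus $0<\liminf_n x_{n,k}\le\limsup_n x_{n,k}<\infty$ for every $k\in W$.

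The core of the argument is a contradiction. Suppose the first alternative of Corollary~\ref{lem:Stiemke2} holds, giving a real combination $v=\sum_\alpha c_\alpha (y_{j_\alpha}-y_{\ell_\alpha})|_{U\cup V}$ with $v_k\le 0$ for $k\in U$, $v_k\ge 0$ for $k\in V$, and at least one of these inequalities strict. Regrouping tier by tier, $v=\tilde v|_{U\cup V}$ where $\tilde v=\sum_\iota\sum_r a^{(\iota)}_r y^{(\iota)}_r\in\R^N$, the inner sum running over the elements $y^{(\iota)}_r$ of the tier $T_\iota$ and satisfying $\sum_r a^{(\iota)}_r=0$ for each $\iota$. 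Because the tier-sums vanish, $x_n^{\tilde v}=\prod_\iota\prod_r\bigl(x_n^{y^{(\iota)}_r}/x_n^{y^{(\iota)}_1}\bigr)^{a^{(\iota)}_r}$, and partition property~$(i)$ confines each ratio to $[1/C,C]$; hence $x_n^{\tilde v}$ stays between two positive constants for all $n$. Writing instead $x_n^{\tilde v}=\prod_k x_{n,k}^{\tilde v_k}$ and splitting over $U$, $V$, $W$: for large $n$, every factor with $k\in U$ has $x_{n,k}<1$ and $\tilde v_k\le 0$, hence is $\ge 1$; every factor with $k\in V$ has $x_{n,k}>1$ and $\tilde v_k\ge 0$, hence is $\ge 1$; and by the preliminary step the product over $W$ is bounded below by a positive constant. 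The strict inequality supplies some $k_0\in U$ with $\tilde v_{k_0}<0$ (so $x_{n,k_0}^{\tilde v_{k_0}}\to\infty$, since $x_{n,k_0}\to 0$) or some $k_0\in V$ with $\tilde v_{k_0}>0$ (likewise $x_{n,k_0}^{\tilde v_{k_0}}\to\infty$); either way $x_n^{\tilde v}\to\infty$, contradicting the uniform upper bound. So the first alternative fails, the second yields $\bar w$, and padding by zeros on $W$ completes the proof.

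I expect the genuine difficulty to be the coordinates in $W$. Stiemke's theorem only delivers strict, open sign conditions, so the requirement that $w$ vanish exactly on $W$ must be engineered separately — here by restricting to $U\cup V$ — and making that restriction harmless requires knowing the $W$-coordinates of $x_n$ are bounded away from $0$ and $\infty$, which is precisely what partial monotonicity provides. By contrast, the two-sided bound on $x_n^{\tilde v}$ for tier-balanced exponents is immediate from partition property~$(i)$, and the rest is bookkeeping.
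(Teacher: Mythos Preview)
Your proposal is correct and follows essentially the same route as the paper's proof: restrict the tier-difference vectors to the coordinates in $U\cup V$, apply Corollary~\ref{lem:Stiemke2} there, and rule out the first alternative by showing the monomial $x_n^{\tilde v}$ is simultaneously bounded (from the tier structure, via ratios in $[1/C,C]$) and tends to $\infty$ (from the sign pattern on $U$ and $V$, together with the boundedness of the $(U\cup V)^c$-coordinates furnished by partial monotonicity). The only cosmetic differences are that you index the Stiemke family by pairs rather than tracking multiplicities $m_k$, and you make the role of partial monotonicity in bounding the $(U\cup V)^c$-coordinates explicit where the paper simply asserts it ``by construction.''
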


\begin{proof}
 We suppose, in order to find a contradiction, that there is no
  conservation relation that respects the triple $(U, V,\{T_i\})$.  
    Define the sets $W_i \subset \R^N$, for $i = 1,\dots,P$, and $W \subset \R^N$ via
  \begin{align*}
    W_i &\eqdef \{y_{j} - y_{k}\in \R^N\ | \  y_{j},  y_{k} \in T_i \},\quad 
    W \eqdef  \bigcup_{i = 1}^{P} W_i,
  \end{align*}
  and denote the elements of $W$ by $\{u_k\}$.  Note that if $T_i$ consists of a single element, then $W_i$ consists solely of the zero vector.  Let $m = |U\cup V| >0$ be the number of elements
  in $U\cup V$ and let 
  $W|_{U\cup V} \subset \R^m$
   be the restriction 
   of $W$ to the
  components associated with the index set $U\cup V$, as discussed at the end of Section \ref{sec:def_concepts}.  Denote the elements of $W|_{U\cup V}$ by
  $\{v_k\}$.  Thus, collecting terminology, $u_k \in \R^N$, whereas $v_k\in \R^m$, and for each $u_k\in W$, there is a
  corresponding $v_k \in W|_{U\cup V}$ for which $u_k|_{U\cup V} = v_k$, however the  mapping $\cdot |_{U\cup V}$ need not be injective.

  The set $W|_{U\cup V}$ must contain at least one nonzero vector because otherwise any non-negative vector with support $U\cup V$ would be a non-negative conservation relation that respects the triple $(U,V,\{T_i\})$, but we have assumed that no such relation exists.

    Because we have assumed
  there is no conservation relation that respects the triple
  $(U, V,\{T_i\})$, we may conclude by Corollary \ref{lem:Stiemke2} that there exist $c_{k} \in \R$ such that
  \begin{align}
  \begin{split}
    \left(\sum_{v_k \in W|_{U\cup V}}c_{k}
      v_{k}\right)_j &\le 0, \quad \text{ if } j \in U\\
      \left(\sum_{v_k \in W|_{U\cup V}}c_{k}
      v_{k}\right)_j &\ge 0, \quad \text{ if } j \in V,
      \end{split}
    \label{app:stiemke1}
  \end{align}
 and such that the inequality is strict for
  at least one $j\in U\cup V$.

  For $v_k \in W|_{U\cup V}$, let $m_{k}$ denote the number of vectors of $W$ that reduced to it.  Define the function $M:\R^N \to \R$ by
  \begin{equation*}
    M(x) \eqdef   \left[ \prod_{ u_k \in W }
      \left(x^{u_k} \right)^{c_{k}/m_{k}} \right],
  \end{equation*}
  where $c_{k}$ and $m_{k}$ are chosen for $u_k \in W$ if $u_k|_{U\cup V} = v_k\in W|_{U\cup V}$.  
  Note that, by construction and by the definition of partitioning along a sequence, if $u_k \in W$, then there are $y_j,y_{\ell} \in T_i$ for some $i$, such that $u_k = y_{\ell} - y_{j}$ and 
  \begin{equation*}
    \frac{1}{C} \le x_{n}^{u_k} =  \frac{x_{n}^{y_{\ell}}}{x_{n}^{y_j}} \le C,
  \end{equation*}
  for all $n\ge 1$.  Therefore, $M(x_{n})$ is uniformly, in $n$, bounded both from 
  above and below.    Noting that each $x_n$ has strictly positive components, we may take logarithms and find
  \begin{align}
    \ln(M(x_{n})) = \bigg(  \sum_{u_k\in W}
    \frac{c_{k}}{m_{k}} u_k \bigg) \cdot \ln x_{n},
    \label{eq:logM}
  \end{align}
  where for a vector $u \in \R^N_{>0}$ we define
  \begin{align*}
  	\ln(u) \eqdef (\ln(u_1),\cdots, \ln(u_N)).
  \end{align*}
  Expanding equation \eqref{eq:logM} along elements of $U\cup V$ and $(U\cup V)^c$ yields,
  \begin{align}
  \begin{split}
     \ln(M(x_{n})) &= \bigg(  \sum_{v_k\in W|_{U\cup V}} c_{k} v_k|_U \bigg) \cdot \ln (x_{n}|_U) + \bigg( \sum_{v_k\in W|_{U\cup V}} c_{k} v_k|_V \bigg) \cdot \ln (x_{n}|_V)\\
     &\hspace{.2in} +    \bigg(  \sum_{u_k\in W} \frac{c_{k}}{m_{k}} u_k|_{(U\cup V)^c} \bigg) \cdot \ln (x_{n}|_{(U\cup V)^c}).
     \end{split}
     \label{eq:bound}
  \end{align}
  By construction, $x_{n,\ell}$ is bounded from both above and below
  for $\ell \in (U\cup V)^c$. Thus, the final term in \eqref{eq:bound} is bounded from above and below. By the inequalities in \eqref{app:stiemke1}, where at least one term is strict, and the facts that $x_{n,i} \to 0$ for each $i \in U$ and  $x_{n,i} \to \infty$ for each $j \in V$ along this subsequence, we may conclude that the sum of the first and second term, and hence $\ln(M(x_{n}))$ itself, is unbounded towards positive infinity as $n \to \infty$.  This is a
  contradiction with the previously found fact that $M(x_{n})$ is
  uniformly bounded above and below, and the result is shown.
\end{proof}

\subsection{Bounded trajectories in the single linkage class case}
\label{sec:results2}

Define $V_1:\R^N_{>0} \to \R_{\ge 0}$  by
\begin{equation}
  V_1(z) \eqdef  \sum_{i = 1}^N \left[ z_i (\ln(z_i) - 1) + 1\right].
  \label{eq:Lyapunov} 
\end{equation}
This is the standard Lyapunov function of chemical reaction network theory where we have chosen $\overline x = (1,\dots,1)$ \cite{FeinbergLec79, Gun2003}.   Note that $\nabla V_1(x) = \ln x$. 
It is straightforward to show that  $V_1$ is convex with a global minimum of zero at $(1,\dots,1)$ \cite{FeinbergLec79}.    The following is a generalization of Lemma 4.7 in \cite{Anderson2011}.

\vspace{.125in}

\begin{lemma}
  Let $\{\S,\C,\Re,\K(t)\}$, with $\S = \{S_1,\dots, S_N\}$, be a weakly reversible,
  non-autonomous mass-action system with bounded kinetics.   Let $D \subset \R^N_{>0}$.  
  One of the following two conditions holds:
  \begin{enumerate}
  \item[C1:] There exists an $M>0$, such that for any $x \in D$ for which $x_{i} > M$ or $x_i < 1/M$ for at least one $i\in \{1,\dots,N\}$, we have
  	\begin{equation*}
      \sum_k \kappa_k(t) x^{y_k}(y_k' - y_k)\cdot \ln(x) < 0, \quad \text{for all } t \ge 0.
  \end{equation*}
      
  \item[C2:] There exists a sequence of points $x_n \in D$ for which  $\lim_{n \to \infty} x_{n,i} \in\{0, \infty\}$ for at least one $i$ and
    \begin{enumerate}[$(i)$]
    \item $\C$ is partitioned along $x_n$ with tiers
      $\{T_i\}_{i=1}^P$, and constant $C$, and
    \item $T_1$ consists of a union of linkage classes.
    \end{enumerate}
  \end{enumerate}
  \label{lem:main}
\end{lemma}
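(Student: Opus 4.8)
The plan is to establish the dichotomy by assuming C1 fails and then exhibiting a sequence that witnesses C2. If C1 fails, then for every $M>0$ there is a point $x\in D$ having some coordinate larger than $M$ or smaller than $1/M$ at which $\sum_k\kappa_k(t)x^{y_k}(y_k'-y_k)\cdot\ln x\ge 0$ for some $t\ge 0$. Taking $M=n$ produces sequences $x_n\in D$ and $t_n\ge 0$ with $\max_i x_{n,i}\to\infty$ or $\min_i x_{n,i}\to 0$ and with
\begin{equation*}
	g_n \eqdef \sum_k\kappa_k(t_n)\,x_n^{y_k}\,(y_k'-y_k)\cdot\ln x_n\ \ge\ 0.
\end{equation*}
Passing to a subsequence I may assume that some fixed coordinate converges to $0$ or to $\infty$, that every coordinate converges in $[0,\infty]$, that $\{x_n\}$ is partially monotonic (Definition \ref{def:partial_mon}), and --- by Lemma \ref{lem:partition} applied with $f(x,y)=x^y$ --- that $\C$ is partitioned along $\{x_n\}$ with tiers $T_1\succ\cdots\succ T_P$ and constant $C$. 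This already secures C2$(i)$, so the entire task is to verify C2$(ii)$, that $T_1$ is a union of linkage classes. We may assume $P\ge 2$, for otherwise $T_1=\C$ is trivially a union of linkage classes. I will then assume $T_1$ is \emph{not} a union of linkage classes and derive $g_n<0$ for all large $n$, contradicting $g_n\ge 0$.

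The engine of the proof is an asymptotic expansion of $g_n$ along the tier filtration. Pick a representative $y^{(i)}\in T_i$ and write $\mu_n^{(i)}\eqdef x_n^{y^{(i)}}$ and $\gamma_n^{(i)}\eqdef\ln\mu_n^{(i)}-\ln\mu_n^{(i+1)}$; by property $(ii)$ of the partition $\gamma_n^{(i)}\to\infty$ for each $i$, while property $(i)$ gives $\ln x_n^{y}=\ln\mu_n^{(\tau(y))}+O(1)$ uniformly in $n$, where $\tau(y)$ is the tier of the complex $y$ and the error is bounded by $\ln C$. Using $(y_k'-y_k)\cdot\ln x_n=\ln x_n^{y_k'}-\ln x_n^{y_k}$, substituting the representatives, telescoping each difference $\ln\mu_n^{(a)}-\ln\mu_n^{(b)}$ into a signed sum of $\gamma_n^{(i)}$'s, and using $\sum_k x_n^{y_k}=O(\mu_n^{(1)})$ to absorb the bounded errors, I obtain
\begin{equation*}
	g_n \;=\; \sum_{i=1}^{P-1}\gamma_n^{(i)}A_n^{(i)}+O(\mu_n^{(1)}),\qquad
	A_n^{(i)}\eqdef\!\!\sum_{k:\ \tau(y_k')\le i<\tau(y_k)}\!\!\kappa_k(t_n)x_n^{y_k}\ -\ \!\!\sum_{k:\ \tau(y_k)\le i<\tau(y_k')}\!\!\kappa_k(t_n)x_n^{y_k}.
\end{equation*}
Thus $A_n^{(i)}$ pits the source monomials of reactions crossing the gap between $T_i$ and $T_{i+1}$ ``uphill'' (product in a strictly higher tier) against those crossing it ``downhill''.

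The decisive point is a sign estimate for each $A_n^{(i)}$. If any reaction crosses the $i$th gap, then a downhill one does: the linkage class of an uphill crosser meets both $B_i\eqdef T_1\cup\cdots\cup T_i$ and its complement, so by weak reversibility (strong connectedness of linkage classes) it contains an edge leaving $B_i$, which is a downhill crosser of gap $i$. Now a downhill crosser of gap $i$ has its source in a tier $\le i$, hence source monomial $\ge \mu_n^{(i)}/C$, while every uphill crosser of gap $i$ has its source in a tier $\ge i+1$, hence source monomial $\le C\mu_n^{(i+1)}$; since $\mu_n^{(i+1)}/\mu_n^{(i)}=e^{-\gamma_n^{(i)}}\to 0$ and $\eta<\kappa_k(t_n)<1/\eta$, this yields $A_n^{(i)}\le -\tfrac{\eta}{2C}\mu_n^{(i)}$ for all large $n$ whenever the $i$th gap is crossed, and $A_n^{(i)}=0$ otherwise. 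Finally, since $T_1=B_1$ is not a union of linkage classes, some reaction leaves $T_1$ (again by strong connectedness), so the first gap is crossed; dropping the nonpositive higher-gap terms,
\begin{equation*}
	g_n\ \le\ \gamma_n^{(1)}A_n^{(1)}+O(\mu_n^{(1)})\ \le\ \mu_n^{(1)}\Big(O(1)-\tfrac{\eta}{2C}\gamma_n^{(1)}\Big)\ <\ 0
\end{equation*}
for $n$ large, the desired contradiction. Hence $T_1$ is a union of linkage classes and C2 holds.

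I expect the main obstacle to be the estimate on $A_n^{(i)}$ and the bookkeeping feeding it. One must check that the errors from property $(i)$ only accumulate to $O(\mu_n^{(1)})$ rather than $O(\mu_n^{(1)}\gamma_n^{(1)})$, and --- the genuinely delicate issue --- that the positive contributions of deep ``uphill'' reactions, whose crossed gaps $\gamma_n^{(i)}$ may be astronomically large, are nonetheless swamped \emph{gap by gap} by the downhill reaction that weak reversibility supplies; the mechanism is exactly that a one-tier descent in $\mu_n$ costs an \emph{exponential} factor $e^{\gamma_n^{(i)}}$, which beats the merely linear-in-$\gamma$ sizes of the crossing terms. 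The remaining ingredients --- the subsequence extraction with partial monotonicity, and the elementary fact that a strongly connected subgraph meeting both a vertex set and its complement has an edge leaving that set --- are routine.
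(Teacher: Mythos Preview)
Your argument is correct and follows the same overall strategy as the paper: assume C1 fails, extract a sequence along which the inner product is nonnegative, partition $\C$ into tiers along a subsequence, and then show that unless $T_1$ is a union of linkage classes the expression is eventually negative. The paper sets up the same tier decomposition (splitting reactions into $\{i\to i\}$, $\{i\to i+m\}$, $\{i\to i-m\}$) and then defers the remainder of the estimate to Lemma~4.7 of \cite{Anderson2011}; you instead carry the estimate through in full. Your bookkeeping differs slightly: rather than grouping terms by the source tier of each reaction, you telescope $\ln\mu_n^{(\tau(y_k'))}-\ln\mu_n^{(\tau(y_k))}$ into a signed sum over the gaps $\gamma_n^{(i)}$ and collect the coefficients $A_n^{(i)}$, which lets you argue gap-by-gap that each $A_n^{(i)}\le 0$ and that $A_n^{(1)}$ is strictly negative of order $\mu_n^{(1)}$. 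This reorganization is equivalent to the paper's comparison but is arguably cleaner, since it isolates exactly the place where weak reversibility is used (existence of an edge leaving each $B_i$ that is straddled by a linkage class) and makes transparent why the $O(\mu_n^{(1)})$ error cannot compete with the $\gamma_n^{(1)}\mu_n^{(1)}$ main term. The extra subsequence extractions you perform (partial monotonicity, convergence of every coordinate in $[0,\infty]$) are harmless but not needed for this lemma.
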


\begin{proof}
  We suppose condition $C1$ does not hold, and will conclude that condition $C2$ must then hold.  Because condition $C1$ does not hold, there is a sequence of points $x_n \in D$ and times $t_n\ge 0$ for which $\lim_{n\to \infty} x_{n,i} \in\{0,\infty\}$ for at least one $i$ and 
  \begin{equation}
      \sum_k \kappa_k(t_n) x_n^{y_k}(y_k' - y_k)\cdot \ln(x_n)\ge 0.
      \label{eq:bound1}
  \end{equation}
 Applying Lemma \ref{lem:partition}, we partition the complexes along an
  appropriate subsequence of $\{x_n\}$ with tiers $T_i$, $i = 1,\dots,P$, and constant $C>1$.    Note that this also had the effect of only considering the analogous subsequence of $\{t_n\}$.
  
  In the following, for tier $i\in \{1,\dots,P\}$, we denote by 
  \begin{itemize}
  \item $\{i \to i\}$ all reactions with both source and product complex in $T_i$,
  \item $\{i \to i + m\}$ all reactions with source complex in $T_i$ and product complex in $T_{i+m}$ for $m \ge 1$,
  \item $\{i \to i - m\}$ all reactions with source complex in $T_i$ and product complex in $T_{i-m}$ for $m \ge 1$.
  \end{itemize}
  Defining $u/v \eqdef (u_1/v_1,\dots, u_N/v_N)$ for $u,v \in \R^N_{>0}$, we re-write the left hand side of the inequality \eqref{eq:bound1}
  \begin{align}
    \sum_k \kappa_k(t_n) x_{n}^{y_k}(y_k' - y_k)\cdot \ln (x_n) 
    &= \sum_{i = 1}^P\bigg[ \sum_{\{ i \to i \}} \kappa_k(t_n)
    x_{n}^{y_k} \ln \left( \frac{x_n^{y_k'}} {x_n^{y_k}}\right) \label{eq:1} \\
    &\hspace{.2in}+ \sum_{m=1}^{P-i} \sum_{\{ i \to i + m\}} \kappa_k(t_n)
    x_{n}^{y_k} \ln \left( \frac{x_n^{y_k'}} {x_n^{y_k}}\right)\label{eq:2} \\
    &\hspace{.2in} + \sum_{m=1}^{i-1} \sum_{\{i \to i - m\}} \kappa_k(t_n)
    x_{n}^{y_k}\ln \left( \frac{x_n^{y_k'}} {x_n^{y_k}}\right)\bigg].\notag
  \end{align}
   Note that, by construction, for large enough $n$ any component in the enumeration \eqref{eq:2} is negative, and, in fact, $\ln(x_n^{y_k'}/x_n^{y_k}) \to -\infty$ as $n \to \infty$, for these terms.   The proof that the total summation above (that is, the left hand side of \eqref{eq:1}) must also, for large enough $n$, be strictly negative unless condition $C2$ holds is now identical to the analogous portion of the proof of Lemma 4.7 in \cite{Anderson2011}, and is omitted here.
\end{proof}

  \begin{lemma}
  	 Let $\{\S,\C,\Re\}$, with $\S = \{S_1,\dots, S_N\}$, be a single linkage class chemical reaction network.  
  Then, there does not exist a sequence of points $x_n\in \R^N_{>0}$, all in the same stoichiometric compatibility class,  for which  $\lim_{n \to \infty} x_{n,i} \in\{0, \infty\}$ for at least one $i$ and
    \begin{enumerate}[$(i)$]
    \item $\C$ is partitioned along $x_n$ with tiers
      $\{T_i\}_{i=1}^P$, and constant $C$, and
    \item $T_1$ consists of a union of linkage classes.
    \end{enumerate}
    \label{lem:no_union}
  \end{lemma}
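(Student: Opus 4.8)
The plan is to show that hypothesis (ii) forces the partition to be trivial, and then to feed this into Theorem \ref{thm:conservation} to produce a linear functional that is at once a conserved quantity of the system and forced to blow up (or collapse) along $\{x_n\}$ --- a contradiction.

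First I would observe that, since $\{\S,\C,\Re\}$ has a single linkage class $L_1 = \C$, the only subset of $\C$ that is a union of linkage classes is $\C$ itself; so hypothesis (ii) gives $T_1 = \C$, and since $\{T_i\}_{i=1}^P$ is a partition of $\C$ into nonempty blocks this forces $P = 1$. Thus there is a single tier $T_1 = \C$, and ``$\C$ is partitioned along $x_n$'' merely says $\tfrac1C x_n^{y} \le x_n^{y'} \le C x_n^{y}$ for all complexes $y,y'$ and all $n$. Next I would pass to a subsequence --- still written $x_n$ --- along which every coordinate converges in $[0,\infty]$; no coordinate can converge both to $0$ and to $\infty$, so writing $U = \{i : \lim_n x_{n,i} = 0\}$ and $V = \{j : \lim_n x_{n,j} = \infty\}$, a further greedy extraction (possible since $U$ and $V$ are finite) produces a sub-subsequence that is non-increasing in each coordinate of $U$ and non-decreasing in each coordinate of $V$, i.e.\ partially monotonic in the sense of Definition \ref{def:partial_mon}. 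All hypotheses survive: the points remain in one compatibility class, $\C$ is still partitioned with the single tier $T_1 = \C$ and constant $C$, and some coordinate still has limit in $\{0,\infty\}$, so $U \cup V \ne \emptyset$. Theorem \ref{thm:conservation} then yields a conservation relation $w \in \R^N$ respecting $(U,V,\{T_1\})$; since the only tier is $T_1 = \C$, condition~2 of ``respects the triple'' forces $w \cdot (y - y') = 0$ for \emph{every} pair of complexes $y,y' \in \C$, hence $w \cdot (y_k' - y_k) = 0$ for each reaction, i.e.\ $w$ is orthogonal to the stoichiometric subspace $S$.

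To finish, I would use that all $x_n$ lie in one coset of $S$ together with $w \perp S$ to conclude that $w \cdot x_n =: \gamma$ is independent of $n$. By condition~1 of the definition of a conservation relation, $U$ is the positive support and $V$ the negative support of $w$, so $\gamma = \sum_{i \in U} w_i x_{n,i} + \sum_{j \in V} w_j x_{n,j}$ for all $n$. If $V \ne \emptyset$, each term $w_j x_{n,j}$ with $j \in V$ tends to $-\infty$ while the $U$-sum tends to $0$, so $w \cdot x_n \to -\infty$, contradicting constancy; if $V = \emptyset$ then $U \ne \emptyset$ and $\gamma = \sum_{i \in U} w_i x_{n,i}$ is strictly positive for every $n$ yet tends to $0$, again a contradiction. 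I expect the only delicate point to be the subsequence extraction: Theorem \ref{thm:conservation} is stated for partially monotonic sequences, so one must check such a subsequence exists without destroying the single-tier partition or the common-coset property --- both are plainly inherited by subsequences, so this reduces to the elementary fact that finitely many coordinates of a coordinatewise-convergent sequence can be made simultaneously monotone in the required directions. The conceptual heart is the first observation: a single linkage class collapses the partition to $P=1$, which is exactly what upgrades the abstract conservation relation of Theorem \ref{thm:conservation} to a genuine signed first integral of the reaction system, and such an integral cannot coexist with a coordinate escaping to $0$ or $\infty$ inside a fixed coset of $S$.
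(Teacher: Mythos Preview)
Your proposal is correct and follows essentially the same route as the paper's own proof: both note that a single linkage class forces $T_1=\C$, extract a partially monotonic subsequence, invoke Theorem~\ref{thm:conservation} to obtain a $w$ orthogonal to the stoichiometric subspace, and then derive a contradiction from the constancy of $w\cdot x_n$ by splitting into the cases $V\ne\emptyset$ (where $w\cdot x_n\to -\infty$) and $V=\emptyset$ (where $w\cdot x_n$ is a strictly positive constant yet tends to~$0$). Your treatment of the subsequence extraction is in fact more explicit than the paper's, which simply asserts that one may pass to a partially monotonic subsequence.
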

  
  \begin{proof}
  	Note that in the one linkage class case $T_1$ can only consist of a union of linkage classes if $T_1 \equiv \C$. We suppose, in order to find a contradiction, that there is a sequence, $\{x_n\}$, all in the same stoichiometric compatibility class,  for which  $\lim_{n \to \infty} x_{n,i} \in\{0, \infty\}$ for at least one $i$ and
    \begin{enumerate}[$(i)$]
    \item $\C$ is partitioned along $x_n$ with tiers
      $\{T_i\}_{i=1}^P$, and constant $C$, and
    \item $T_1$ consists of a union of linkage classes.
    \end{enumerate}
    Perhaps after restricting ourselves to a sub-sequence, we may choose $x_n$ to be partially monotonic (recall Definition \ref{def:partial_mon}). Let 
  \begin{align*}
   U &=  \{i \in \{1,\dots,N\} \, : \, \lim_{n\to \infty} x_{n,i} = 0\}\\
   V &=  \{j \in \{1,\dots,N\} \, : \, \lim_{n\to \infty} x_{n,j} = \infty\}.
  \end{align*}  
  Note that $U\cup V$ is nonempty by construction.  By Theorem \ref{thm:conservation} there is a conservation relation $w \in \R^N_{\ge 0}$ that
  respects the triple $(U,V,\{T_i\})$.  
  
 For each $j \in V$,    $w_j < 0$ and $x_{n,j}\to \infty$.  Thus, if $V$ is nonempty, $w\cdot x_n \to -\infty$, as $n\to \infty$.  If $V$ is empty, then $U$ is necessarily nonempty and, by construction, $w\cdot x_n \to 0$, as $n \to \infty$.  However, because $T_1 \equiv \C$, we have that $w \cdot (y_k' - y_k) = 0$ for all $y_k \to y_k' \in \Re$.  Thus, as the $x_n$ are all in the same stoichiometric compatibility class, we have that $w\cdot x_n$ is a constant.  This shows that we can not have $w\cdot x_n \to -\infty$, as $n \to \infty$, and so $V$ must be empty.  However, by our construction we may then conclude that $w_i \ge 0$ for all $i$, and $w_i>0$ for at least one $i$.  Hence, $w\cdot x_n > 0$, and not zero.
  \end{proof}

  We now have our main result.
  
  \begin{theorem}
       Let $\{\S,\C,\Re,\K(t)\}$, with $\S = \{S_1,\dots, S_N\}$, be a single linkage class, weakly reversible,
  non-autonomous mass-action system with bounded kinetics.  Then, $\limsup_{t\to \infty} |\phi(t,x_0)| < \infty$ for  each $x_0 \in \R^N_{>0}$.   That is, the system has {\em bounded trajectories.}
  \label{thm:main1}
  \end{theorem}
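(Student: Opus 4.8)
The plan is to use the function $V_1$ from \eqref{eq:Lyapunov} as a Lyapunov function and to show that it stays bounded along every trajectory. Fix $x_0 \in \R^N_{>0}$, write $x(t) = \phi(t,x_0)$, and let $D \eqdef (x_0 + S)\cap \R^N_{>0}$ be the positive stoichiometric compatibility class through $x_0$; under the mild conditions on the kinetics recalled in Section \ref{sec:def_concepts}, $x(t) \in D$ for all $t \ge 0$. Since $\nabla V_1(x) = \ln x$, the chain rule gives, along the trajectory,
\begin{equation*}
  \ddt V_1(x(t)) \;=\; \ln(x(t))\cdot \dot x(t) \;=\; \sum_k \kappa_k(t)\, x(t)^{y_k}\,(y_k' - y_k)\cdot \ln(x(t)),
\end{equation*}
which is exactly the expression appearing in conditions C1 and C2 of Lemma \ref{lem:main}.

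First I would apply Lemma \ref{lem:main} with this choice of $D$: either C1 or C2 must hold. Condition C2 would furnish a sequence $\{x_n\}\subset D$ — hence all lying in the single compatibility class $x_0 + S$ — that is partitioned along $\{x_n\}$ with $T_1$ a union of linkage classes and with some coordinate tending to $0$ or $\infty$; but in a single linkage class network this is precisely what Lemma \ref{lem:no_union} forbids. Hence C1 holds: there is an $M > 0$ such that $\ddt V_1(x(t)) < 0$ at every $t\ge 0$ for which $x(t)$ has some coordinate larger than $M$ or smaller than $1/M$.

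Next I would convert this ``decrease outside a box'' into a global bound on $V_1$. The set $\mathcal{K}_M \eqdef D\cap [1/M,M]^N$ is compact, so $K \eqdef \max\{V_1(x_0),\, \sup_{z\in\mathcal{K}_M} V_1(z)\}$ is finite (if $\mathcal{K}_M=\emptyset$, take $K=V_1(x_0)$). I claim $V_1(x(t))\le K$ for all $t\ge 0$. If not, pick $t_0$ with $V_1(x(t_0)) > K$; since $V_1(x(0))\le K$, continuity of $t\mapsto V_1(x(t))$ yields a last time $s\in[0,t_0)$ with $V_1(x(s)) = K$ and $V_1(x(t)) > K$ on $(s,t_0]$. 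On that interval $x(t)\notin\mathcal{K}_M$, since every point of $\mathcal{K}_M$ has $V_1\le K$, so $x(t)$ has an extreme coordinate and $\ddt V_1(x(t)) < 0$ there; letting $t\downarrow s$ then gives $V_1(x(t_0)) \le V_1(x(s)) = K$, a contradiction.

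Finally, because each summand $z_i(\ln z_i - 1) + 1$ of $V_1$ tends to $+\infty$ as $z_i\to\infty$, the sublevel set $\{z\in\R^N_{>0} : V_1(z)\le K\}$ is bounded; since $x(t)$ remains in it for all $t$, we obtain $\limsup_{t\to\infty}|\phi(t,x_0)| < \infty$, as claimed. The one genuinely delicate ingredient is already dispatched by the earlier results — namely the combinatorial fact, coming from Lemma \ref{lem:no_union} and ultimately Theorem \ref{thm:conservation}, that a single linkage class cannot sustain a partitioned sequence whose top tier is all of $\C$ while some species concentration degenerates to $0$ or $\infty$; once that is in hand, the remainder is the routine Lyapunov/barrier bookkeeping sketched above. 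The only points requiring a little care are verifying forward invariance and positivity of the trajectory in $D$, and checking that $\ddt V_1(x(t))$ equals the quantity controlled by Lemma \ref{lem:main}.
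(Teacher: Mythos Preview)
Your proof is correct and follows essentially the same route as the paper: apply Lemma~\ref{lem:main} to the positive stoichiometric compatibility class, rule out C2 via Lemma~\ref{lem:no_union}, and use the resulting decrease of $V_1$ outside a compact region as a Lyapunov barrier. The only cosmetic differences are that the paper immediately weakens C1 to the condition $|x|>M$ (ignoring the $x_i<1/M$ clause) and takes $B_{x_0}=\sup\{V_1(x):|x|=M \text{ or } x=x_0\}$, whereas you work directly with the box $[1/M,M]^N$ and spell out the barrier argument more carefully.
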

  
\begin{proof}  
  Letting $D$ be a non-empty positive stoichiometric compatibility class, in the statement of Lemma \ref{lem:main}, we conclude by combining Lemmas \ref{lem:main} and \ref{lem:no_union}  that there is an $M>0$ so that for any $x\in D$ with $|x|>M$, we have 
  	\begin{equation*}
     \sum_k \kappa_k(t) x^{y_k}(y_k' - y_k)\cdot \ln(x) < 0, \quad \text{for all } t\ge 0.
  \end{equation*}
  Therefore, 
  \begin{equation}
  	\frac{\partial}{\partial t} V_1(\phi(t,x_0)) < 0, 
	\label{eq:good_bound}
  \end{equation}
  whenever $|\phi(t,x_0)|>M$.  Let $B_{x_0} = \sup\{V_1(x)\ : \ |x| = M\text{ or } x = x_0\}$.
    Inequality \eqref{eq:good_bound} shows that $V_1(\phi(t,x_0)) \le B_{x_0}$ for all $t\ge 0$, which when combined with the fact that $V_1(x) \to \infty$ as $|x| \to \infty$, 
    proves the result.
\end{proof}

%


%

\subsection{Permanence}
\label{sec:permanence}
Note that Lemma \ref{lem:main} and, in particular, equation \eqref{eq:good_bound} do not give a rate at which $V_1$ is decreasing.  Hence, we can not conclude in general that all trajectories contained within a given stoichiometric compatibility class enter a single compact subset of $\R^N_{\ge 0}$. That is, we can not conclude that trajectories are permanent  in the sense of Definition \ref{def:permanent} below.  This is quantified above by the explicit dependence of $B_{x_0}$ upon $x_0$. However, we may  strengthen our results slightly.

\begin{definition}
   For $t\ge 0$ denoting time, let $\phi(t,x_0)$ be a trajectory to a dynamical system in $\R^N$ with initial condition $x_0$.  The system is said to be {\em permanent} if there is a $\rho > 0$ such that for every $x_0\in \R^N_{\ge 0}$,  
   \begin{equation*}
 \rho < \liminf_{t\to \infty} \phi_i(t,x_0) \le \limsup_{t\to \infty} \phi_i(t,x_0) < 1/\rho
    \end{equation*}
     for all  $i \in \{1,\dots,N\}$. 
\label{def:permanent}
\end{definition}

\begin{lemma}
  Let $\{\S,\C,\Re,\K(t)\}$, with $\S = \{S_1,\dots, S_N\}$, be a weakly reversible,
  non-autonomous mass-action system with bounded kinetics.   Let $D \subset \R^N_{>0}$ be such that {\em dist}$(D,\partial \R^N_{>0}) >\delta$, for some $\delta>0$.  
  One of the following two conditions holds:
  \begin{enumerate}
  \item[C1:] For any $\epsilon > 0$, there exists an $M=M_{\epsilon,\delta}>0$ such that for any $x \in D$ with $|x|>M$, we have
  	\begin{equation*}
      \sum_k \kappa_k(t) x^{y_k}(y_k' - y_k)\cdot \ln(x) < - \epsilon, \quad \text{for all } t \ge 0.
  \end{equation*}
      
  \item[C2:] There exists a sequence of points $x_n \in D$ that satisfies $\lim_{n \to \infty} |x_n| = \infty$ and
    \begin{enumerate}[$(i)$]
    \item $\C$ is partitioned along $x_n$ with tiers
      $\{T_i\}_{i=1}^P$, and constant $C$, and
    \item $T_1$ consists of a union of linkage classes.
    \end{enumerate}
  \end{enumerate}
  \label{lem:main2}
\end{lemma}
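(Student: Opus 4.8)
The plan is to follow the proof of Lemma \ref{lem:main} almost verbatim; the only new ingredient, and the reason the conclusion can be strengthened from ``$<0$'' to ``$<-\epsilon$'', is that the hypothesis $\mathrm{dist}(D,\partial\R^N_{>0})>\delta$ forces the largest monomial to blow up along any sequence in $D$ whose norm tends to infinity.

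Suppose C1 fails. Then there is an $\epsilon>0$ such that for every $M$ one can find $x\in D$ with $|x|>M$ and a $t\ge 0$ with $\sum_k\kappa_k(t)x^{y_k}(y_k'-y_k)\cdot\ln x\ge-\epsilon$; taking $M=n$ produces $x_n\in D$ with $|x_n|\to\infty$ and times $t_n\ge 0$ such that $\Sigma_n:=\sum_k\kappa_k(t_n)x_n^{y_k}(y_k'-y_k)\cdot\ln(x_n)\ge-\epsilon$ for all $n$. By Lemma \ref{lem:partition} pass to a subsequence (keeping the matching $t_n$) along which $\C$ is partitioned with tiers $\{T_i\}_{i=1}^P$ and constant $C>1$. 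If $T_1$ is a union of linkage classes then $\{x_n\}$ already witnesses C2: the condition ``$\lim x_{n,i}\in\{0,\infty\}$ for some $i$'' appearing in Lemma \ref{lem:main} is here just ``$\lim|x_n|=\infty$'', since $\mathrm{dist}(D,\partial)>\delta$ keeps every coordinate $\ge\delta$ and hence rules out $\lim x_{n,i}=0$. So it suffices to reach a contradiction under the assumption that $T_1$ is \emph{not} a union of linkage classes.

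In that case weak reversibility, together with the fact that $T_1$ is the highest tier, yields a reaction $y_{k_0}\to y_{k_0}'$ with $y_{k_0}\in T_1$ and $y_{k_0}'\in T_{1+m}$, $m\ge 1$ (otherwise $T_1$ would be forward-closed in the reaction graph, and strong connectivity of each linkage class would make $T_1$ a union of linkage classes). Set $A_n:=\max_{y\in\C}x_n^y$. Since every coordinate of $x_n$ is $\ge\delta$ while some coordinate $x_{n,i}\to\infty$, and by Definition \ref{def:crn}(1) some complex $y$ has $y_i\ge1$, one has $x_n^y\ge\min(1,\delta)^{|y|}x_{n,i}\to\infty$, so $A_n\to\infty$. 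Now decompose $\Sigma_n$ by tiers exactly as in the displayed identity in the proof of Lemma \ref{lem:main}. Using partition property (i), each within-tier term is at most $(\ln C/\eta)A_n$; each ``upward'' term (source in $T_i$, $i\ge 2$, product in a strictly higher tier) satisfies $\kappa_k x_n^{y_k}\ln(x_n^{y_k'}/x_n^{y_k})\le(1/\eta)x_n^{y_k}\ln(A_n/x_n^{y_k})=o(A_n)$, because $x_n^{y_k}/A_n\to0$ and $s\ln(1/s)\to0$ as $s\to0^+$; each ``downward'' term (product in a strictly lower tier) is eventually $\le0$; and the $k_0$ term is, for large $n$, at most $-\eta(A_n/C)L_n$, where $L_n:=-\ln(x_n^{y_{k_0}'}/x_n^{y_{k_0}})\to+\infty$ by property (ii) and $x_n^{y_{k_0}}\ge A_n/C$. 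Summing over the finitely many reactions gives $\Sigma_n\le A_n\big(K+o(1)-(\eta/C)L_n\big)$ with $K$ depending only on the network and $\eta$, so $\Sigma_n\to-\infty$, contradicting $\Sigma_n\ge-\epsilon$. The same estimate, read in the contrapositive, is precisely what supplies the quantitative form of C1: whenever $x_n\in D$ with $|x_n|\to\infty$, necessarily $\Sigma_n\to-\infty$, so $\Sigma_n<-\epsilon$ once $|x_n|$ is large.

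The main obstacle is the bookkeeping in that last step, above all the verification that the ``upward'' contributions are genuinely $o(A_n)$ relative to the dominant monomial; this is the part inherited from the proof of Lemma~4.7 of \cite{Anderson2011}, so I would either reproduce those estimates or cite them, inserting only the new observation that $A_n\to\infty$ under $\mathrm{dist}(D,\partial)>\delta$, which is exactly what upgrades ``$\Sigma_n<0$ eventually'' to ``$\Sigma_n\to-\infty$''.
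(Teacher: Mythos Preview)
Your proposal is correct and follows essentially the same approach as the paper: assume C1 fails, extract a sequence with $|x_n|\to\infty$ and $\Sigma_n\ge-\epsilon$, partition $\C$ into tiers along a subsequence, and use the distance hypothesis $\mathrm{dist}(D,\partial\R^N_{>0})>\delta$ to conclude that the top-tier monomials blow up (your $A_n\to\infty$, the paper's ``$x_n^y\to\infty$ for $y\in T_1$''), so that the downward term from $T_1$ drives $\Sigma_n\to-\infty$ unless $T_1$ is a union of linkage classes. The paper's own proof is terser, deferring the tier estimates to Lemma~4.7 of \cite{Anderson2011} and only flagging the new observation that $x_n^y\to\infty$ on $T_1$, whereas you spell out the $O(A_n)$, $o(A_n)$, and $-(\eta/C)A_nL_n$ bounds explicitly; but the underlying argument is the same.
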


\begin{proof}
	The proof is essentially the same as for Lemma \ref{lem:main}.  We first suppose condition $C1$ does not hold.  Let $\epsilon > 0$.  By our assumption, there must be a sequence of points $x_n \in D$ and times $t_n\ge 0$ such that $\lim_{n\to \infty}|x_n| = \infty$ and 
\begin{equation*}
	\sum_k \kappa_k(t_n) x_n^{y_k}(y_k' - y_k)\cdot \ln(x_n) \ge -\epsilon.
\end{equation*}
The proof is now exactly the same as for Lemma \ref{lem:main}, except that you recognize that for any $y\in T_1$, we necessarily have that $x_n^y \to \infty$, as $n \to \infty$.  Therefore, the terms in the summation in \eqref{eq:2} not only  dominate the others, but force the expression to $-\infty$ as $n\to \infty$, thereby concluding the proof.
\end{proof}

\begin{corollary}
    Let $\{\S,\C,\Re,\K(t)\}$, with $\S = \{S_1,\dots, S_N\}$, be a single linkage class, weakly reversible,
  non-autonomous mass-action system with bounded kinetics.  Let $P$ be a positive stoichiometric compatibility class and suppose there is a $\delta>0$ so that 
  \begin{equation*}
     \liminf_{t \to \infty}\phi_i(t,x_0) > \delta, \quad  \text{for all } i \in \{1,\dots,N\} \text{ and all } x_0 \in P.
     \end{equation*}     
  Then,  there is a $\rho > 0$ such that for any $x_0 \in P$ 
  \begin{equation*}
    \rho < \liminf_{t\to \infty} \phi_i(t,x_0) \le \limsup_{t\to \infty} \phi_i(t,x_0) < 1/\rho
    \end{equation*}
     for all  $i \in \{1,\dots,N\}$.  That is, the system is {\em permanent}.
     \label{cor:2}
\end{corollary}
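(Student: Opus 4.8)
The plan is to repeat the argument of Theorem~\ref{thm:main1}, but with the quantitative Lemma~\ref{lem:main2} in place of Lemma~\ref{lem:main}, so as to obtain a bound on $\limsup_{t\to\infty}|\phi(t,x_0)|$ that does not depend on $x_0$. Fix $\delta$ as in the hypothesis and set $D = \{x \in P : x_i > \delta/2 \text{ for all } i\}$, so that $\mathrm{dist}(D,\partial\R^N_{>0}) \ge \delta/2 > 0$. Since $P$ is forward invariant and, by hypothesis, every trajectory starting in $P$ eventually has all coordinates larger than $\delta$, for each $x_0 \in P$ there is a time $T_{x_0} \ge 0$ with $\phi(t,x_0) \in D$ for all $t \ge T_{x_0}$.

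First I would rule out alternative C2 of Lemma~\ref{lem:main2} for this $D$. A sequence $\{x_n\} \subset D$ satisfying C2 lies entirely in the single stoichiometric compatibility class $P$ and has $|x_n| \to \infty$; after passing to a subsequence along which some fixed coordinate tends to $\infty$ (the partition into tiers $\{T_i\}$ with constant $C$, and the property that $T_1$ is a union of linkage classes, are preserved under passage to a subsequence), we would obtain exactly the configuration excluded by Lemma~\ref{lem:no_union}. Hence C1 of Lemma~\ref{lem:main2} holds, and applying it with $\epsilon = 1$ produces an $M > 0$, depending only on $\delta$, with $\sum_k \kappa_k(t) x^{y_k}(y_k'-y_k)\cdot\ln(x) < -1$ for every $x \in D$ with $|x| > M$ and every $t \ge 0$. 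Since $\nabla V_1(x) = \ln(x)$, the chain rule gives $\frac{\partial}{\partial t} V_1(\phi(t,x_0)) < -1$ whenever $t \ge T_{x_0}$ and $|\phi(t,x_0)| > M$.

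Next, set $B \eqdef \sup\{ V_1(x) : x \in P,\ x_i \ge \delta/2 \text{ for all } i,\ |x| \le M \}$, a supremum of the continuous function $V_1$ over a compact set, so $B < \infty$, and $B$ is independent of $x_0$. For $t \ge T_{x_0}$, if $V_1(\phi(t,x_0)) > B$ then $|\phi(t,x_0)| > M$ (as $\phi(t,x_0) \in D$ and $V_1 \le B$ on $D \cap \{|x| \le M\}$), so $V_1\circ\phi$ strictly decreases, at rate exceeding $1$, at all such times; a standard comparison argument then gives $V_1(\phi(t,x_0)) \le \max\{B,\ V_1(\phi(T_{x_0},x_0)) - (t - T_{x_0})\}$ for $t \ge T_{x_0}$, and hence $\limsup_{t\to\infty} V_1(\phi(t,x_0)) \le B$. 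Because $V_1(x) \to \infty$ as $|x| \to \infty$, there is an $R > 0$, independent of $x_0$, with $\{x : V_1(x) \le B+1\} \subset \{x : |x| \le R\}$, so $\limsup_{t\to\infty}|\phi(t,x_0)| \le R$ for every $x_0 \in P$. Taking $\rho = \min\{\delta,\, 1/(R+1)\}$ and combining with the hypothesised bound $\liminf_{t\to\infty}\phi_i(t,x_0) > \delta$ yields $\rho < \liminf_{t\to\infty}\phi_i(t,x_0) \le \limsup_{t\to\infty}\phi_i(t,x_0) < 1/\rho$ for all $i$ and all $x_0 \in P$.

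The only step requiring care beyond invoking Lemmas~\ref{lem:main2} and~\ref{lem:no_union} is the comparison argument, and the essential point — precisely what distinguishes this corollary from Theorem~\ref{thm:main1}, whose bound $B_{x_0}$ depends on the initial condition — is that the threshold $B$ and radius $R$ can be chosen uniformly over $x_0 \in P$. This works because $\delta$ (hence $D$), the constant $M$ coming from C1 of Lemma~\ref{lem:main2}, and the sublevel sets of $V_1$ are all independent of $x_0$.
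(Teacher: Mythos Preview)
Your proposal is correct and follows the same route as the paper: apply Lemma~\ref{lem:main2} with $D$ equal to $P$ restricted away from the boundary by $\delta$ (you use $\delta/2$, the paper uses $\delta$, an immaterial difference), rule out alternative C2 via Lemma~\ref{lem:no_union}, and use the resulting strict negative rate $\tfrac{\partial}{\partial t}V_1(\phi(t,x_0)) < -\epsilon$ to force $V_1$ below a threshold that is independent of $x_0$. The paper's own proof is a brief sketch saying exactly this; you have simply written out the comparison argument and the definition of the uniform threshold $B$ explicitly.
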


\begin{proof}
	The lower bound follows by our assumption.  The upper bound follows from Lemmas \ref{lem:main2} (with $D$ equal to $P$ restricted to those $x$ a distance of at least $\delta$ away from the boundary), \ref{lem:no_union}, and similar arguments as in the proof of Theorem \ref{thm:main1}.  The only real difference in the proof is that the analog of equation \eqref{eq:good_bound} is 
\begin{equation*}
  \frac{\partial}{\partial t} V_1(\phi(t,x_0)) < -\epsilon,
  \end{equation*}
  whenever $|\phi(t,x_0)|>M$, giving us the needed force to guarantee $|\phi(t,x_0)|$ decreases below some $1/\rho$. 
\end{proof}

 Note that the $M=M_{\epsilon,\delta}>0$ of Lemma \ref{lem:main2}, and hence in the proof of Corollary \ref{cor:2}, explicitly depends upon $\delta$.  Therefore, it is not sufficient in the statement of Corollary \ref{cor:2} to assume the existence of a different $\delta = \delta_{x_0}>0$ for \textit{each} $x_0$.  Thus, the main results of \cite{Anderson2011} pertaining to weakly reversible networks (with arbitrary deficiency) are not strong enough to guarantee permanence using the above methods.

\bibliographystyle{amsplain} \bibliography{BoundedTraj}
  
\end{document}